\pgfplotsset{compat=newest}
\tikzset
{
    treenode/.style = {circle, draw=black, align=center, minimum size=1cm},
    subtree/.style  = {isosceles triangle, draw=black, align=center, minimum height=0.5cm, minimum width=1cm, shape border rotate=90, anchor=north}
}
\def\Re{\operatorname{Re}}
\numberwithin{equation}{section}
\DeclareFontFamily{U}{mathx}{\hyphenchar\font45}
\DeclareFontShape{U}{mathx}{m}{n}{
      <5> <6> <7> <8> <9> <10>
      <10.95> <12> <14.4> <17.28> <20.74> <24.88>
      mathx10
      }{}
\DeclareSymbolFont{mathx}{U}{mathx}{m}{n}
\DeclareMathAccent{\widecheck}{0}{mathx}{"71}
\DeclareMathAccent{\wideparen}{0}{mathx}{"75}
\newcommand{\leqnomode}{\tagsleft@true}
\newcommand{\reqnomode}{\tagsleft@false}
\theoremstyle{plain}
\newtheorem{thm}{Theorem}[section]
\newtheorem{cor}[thm]{Corollary}
\newtheorem{lem}[thm]{Lemma}
\newtheorem{prop}[thm]{Proposition}
\theoremstyle{definition}
\newtheorem{oss}[thm]{Remark}
\theoremstyle{remark}
\begin{document}

\title[Hardy--Littlewood fractional maximal operators]{Hardy--Littlewood fractional maximal operators on homogeneous trees} 

 \author[M.\ Levi]{Matteo Levi}
\address{ MaLGa Center - DIBRIS  - Università di Genova, Genoa, Italy.}
\email{m.l.matteolevi@gmail.com}


 \author[F.\ Santagati]{Federico Santagati}
\address{Dipartimento di Scienze Matematiche ``Giuseppe Luigi Lagrange'',
  Politecnico di Torino, Corso Duca degli Abruzzi 24, 10129 Torino,
  Italy - Dipartimento di Eccellenza 2018-2022}
\email{federico.santagati@polito.it}


\keywords{Homogeneous trees, Hardy--Littlewood maximal function, fractional maximal operator, interpolation}

\thanks{{\em Math Subject Classification} 05C05, 42B25, 43A85}


\thanks{The authors are members of the Gruppo Nazionale per l'Analisi
  Matematica, la Probabilit\`a e le loro Applicazioni (GNAMPA) of the
  Istituto Nazionale di Alta Matematica (INdAM) and of the project  ``Harmonic analysis on continuous and discrete
structures'' funded by Compagnia di San Paolo (Cup E13C21000270007) }

\begin{abstract} 
 We study the mapping properties of the Hardy--Littlewood fractional maximal operator between Lorentz spaces of the homogeneous tree and discuss the optimality of all the results.
\end{abstract}

\maketitle
\section{Introduction}
In the generality of any metric measure space $(X,\mu)$ one can define the (centered) \textit{fractional maximal operator} of parameter $\gamma>0$ as the operator $\mathcal M^{\gamma}$ acting on locally integrable functions $f\in \mathbb{C}^X$ as follows,
\begin{equation}\label{max}
\mathcal M^{\gamma}f(x)=\sup_{r>0} \frac{1}{\mu(B_r(x))^{\gamma}}\int_{B_r(x)} |f|d\mu,\qquad  x\in X. 
\end{equation}
We shall simply write $\mathcal{M}$ for the classical Hardy--Littlewood maximal operator, which corresponds to the choice of parameter $\gamma=1$.

 If the measure $\mu$ is doubling, the space $X$ is of homogeneous type in the sense of Coifman and Weiss and $\mathcal{M}$ is bounded on $L^p(\mu)$ for every $p>1$ and is of weak type $(1,1)$  \cite{CW}. This ``maximal theorem" has a fundamental role in harmonic analysis, in particular in the theory of singular integrals. Many attempts have been made to extend this theory beyond the setting of spaces of homogeneous type.
In \cite{NTV97} Nazarov, Treil, and Volberg, under the hypothesis that the measure $\mu$ has at most polynomial growth, managed to prove that a \textit{modified} maximal operator (obtained substituting $\mu(B_r(x))$ with $\mu(B_{3r}(x))$ in \eqref{max}) is of weak type (1,1). However, even in the polynomial growth regime, the same is not true in general for $\mathcal{M}$ \cite{A}.
In \cite{CMM09,CMM10} Carbonaro, Mauceri and Meda developed a theory of singular integrals for metric measure spaces satisfying the isoperimetric and the so-called middle point property. In their setting, the maximal theorem holds for the dyadic maximal operator, but there is no equivalent result available for $\mathcal{M}$. Despite these difficulties, there are some interesting classes of metric measure spaces where the theory seems to behave well even in the absence of the doubling property. This is the case for non-compact symmetric spaces, where $\mathcal M$ is bounded on $L^p$ for every $p>1$ \cite{clerc} and is of weak type $(1,1)$ \cite{S}, and for homogeneous trees, where the maximal theorem for $\mathcal{M}$ was proved independently by Cowling, Meda, and Setti \cite{CMS10} and by Naor and Tao \cite{NT}. As observed in \cite{CMS10}, the result on the homogeneous tree can be also deduced from an older theorem by Rochberg and Taibleson \cite{rochberg}. In these settings it is also known that the fractional maximal operator $\mathcal{M}^{1/2}$ is of restricted weak type $(2,2)$; this was proved by Ionescu on non-compact symmetric spaces of real rank one \cite{Ionescu1} and by Veca on homogeneous trees \cite{V}.

One aspect where the nondoubling theory diverges from the classical one is the following. Taking the supremum in \eqref{max} over \textit{all} the balls containing $x$, and not only on those centered at it, one obtains the so-called uncentered (fractional) maximal operator, which we denote by $\widetilde{\mathcal M}^{\gamma}$. While in spaces of homogeneous type $\mathcal{M}^{\gamma}$ and $\widetilde{\mathcal M}^{\gamma}$ are comparable, in the nondoubling setting they may be very distinct objects. For instance, ${\mathcal M}$ is of weak type $(1,1)$ with respect to any Borel measure in $\mathbb{R}^n$, $n\geq 1$ (see for instance \cite{Guzman}), but the uncentered operator $\widetilde{\mathcal M}$ associated to the gaussian measure is not of weak type $(1,1)$ on $\mathbb{R}^2$ \cite{SJ}. On the other hand, in some situations boundedness results for $\mathcal M^\gamma$ can be transferred to $\widetilde{\mathcal M}^{2\gamma}$. For example, if $\mu$ is a nondoubling measure such that $\mu(B_r(x))\approx k^r$ for some constant $k$ and every $x\in X$, if $x$ belongs to a ball $B$ of radius $r$ then $B\subseteq B_{2r}(x)$, from which follows
\begin{equation}\label{centered to uncentered}
    \frac{1}{\mu(B)^{\gamma}}\int_{B} |f|d\mu\le \frac{1}{\mu(B)^{\gamma}}\int_{B_{2r}(x)} |f|d\mu\approx \frac{1}{\mu(B_{2r}(x))^{\gamma/2}}\int_{B_{2r}(x)}|f|d\mu.
\end{equation}
This is the case, again, on symmetric spaces and homogeneous trees, and hence, the results by Ionescu and Veca on $\mathcal{M}^{1/2}$ also give that the uncentered maximal operator $\widetilde{\mathcal M}$ is of weak type $(1,1)$ in the respective settings.

That said, the interest in fractional maximal operators cannot be reduced to the study of uncentered maximal operators in nondoubling settings. Weighted $L^p$ norm inequalities for fractional maximal operators on $\mathbb{R}^n$ are a classical object of study \cite{muckenhoupt, welland, adams} and are intimately related to boundedness properties of Riesz potentials. More recently, also the mapping properties of $\mathcal{M^\gamma}$ on Sobolev, Hölder, Campanato, Morrey and variable exponent $L^p$ spaces have been studied, in $\mathbb{R}^n$ and in metric spaces \cite{saksman, heikkinen2013fractional,heikkinen2013mapping,capone}. Also in the discrete setting mapping properties of maximal operators are an active area of research. Weak type estimates for $\mathcal{M}$ have been obtained on $\mathbb{Z}^d$ \cite{carneiro2012endpoint} and on graphs fulfilling some geometric properties \cite{soria}. On the homogeneous tree also a weighted theory for strong and weak type estimates is developing, both for $\mathcal{M}$ \cite{ombrosiLp,ombrosi2021fefferman} and for $\mathcal{M}^\gamma$ with $\gamma\in (0,1)$ \cite{GR}.

The aim of this note is to complement the works \cite{CMS10, NT} and \cite{V} extending the study of the mapping properties of $\mathcal{M}^\gamma$ on Lebesgue and Lorentz spaces of the homogeneous tree $T$ also to values of $\gamma\neq 1,1/2$. Here, as in the aforementioned works, $T$ is endowed with the standard graph distance and the counting measure $|\cdot|$. Observe that if $T$ is homogeneous of order $k$, i.e., $|B_1(x)|=k+2$ for every $x\in T$, then $|B_r(x)|\approx k^r$, for every $x\in T, r\in \mathbb{N}$. Hence, one should keep in mind that all the positive boundedness results we obtain for $\mathcal{M}^\gamma$ transfer to boundedness results for the uncentered fractional operator $\widetilde{\mathcal M}^{2\gamma}$ by means of \eqref{centered to uncentered}. The paper is organized as follows.
In Section \ref{sec: boundedness} we prove endpoint results for $\mathcal{M}^\gamma$ analogous to the aforementioned ones for values of $\gamma\neq 1,1/2$. In particular, in Theorems \ref{complex interpolation} and \ref{p: homtree} we show that if $\gamma\in (1/2,1)$, then $\mathcal{M}^\gamma$ is of restricted weak-type $(1/\gamma,1/\gamma)$ and is bounded from $L^{1/(1-\gamma), 1/[2(1-\gamma)]}$ to $L^{1/(1-\gamma), \infty}$, and if $\gamma\in (0,1/2]$, then it is of restricted weak-type $(1/(1-\gamma),1/\gamma)$. While Theorem \ref{p: homtree} follows from a rather simple application of a sharpened version of the Kunze-Stein phenomenon for Lorentz spaces on the homogeneous tree \cite{CMS98}, for Theorem \ref{complex interpolation} a different approach based on a complex interpolation argument is needed. We also provide strong type results. In Theorem \ref{strong large gamma} we prove that, for any $\gamma\in(0,1]$,  $\mathcal{M^\gamma}$ maps $L^p$ continuously to $L^q$ when $1\leq p\leq q\leq\infty$ and (i) $q>1/\gamma$ and $p<1/(1-\gamma)$ or (ii) $p=1/(1-\gamma), q=\infty$.

In Section \ref{sec:opt} we discuss the optimality of the results of Section \ref{sec: boundedness}. In Theorem \ref{counterexample 2} we prove that Theorems \ref{complex interpolation} and \ref{p: homtree} are optimal in the sense that, if $t \in [1,\infty)$, then $\mathcal{M}^{\gamma}$ is unbounded from $L^{1/(1-\gamma),s}$ to $L^{1/(1-\gamma),t}$ and from $L^{p,s}$ to $L^{1/\gamma,t}$, for every $p \in [1,\infty)$ and $s\in[1,\infty]$. Proposition \ref{optaxis} is a partial converse to Theorem \ref{strong large gamma}: it states that $\mathcal{M^\gamma}$ does not map $L^p$ continuously to $L^q$ for all the values of $p$ and $q$ not satisfying (i) and (ii) and not lying on the open segment $(1-\gamma,1/q)$ with $0<1/q<\min\{\gamma, 1-\gamma\}$; to prove or disprove the $L^p$ to $L^q$ boundedness for points on such a critical segment seems to be a difficult problem, which we leave open for the time being. Theorem \ref{optM12} shows that $\mathcal{M}^{1/2}$ is unbounded from $L^{2,s}$ to $L^{2,\infty}$ for every $s>1$. This is an optimality result for the aforementioned  theorem by Veca and should be considered as the tree analogue of \cite[Section 4]{Ionescu2}. If Theorem \ref{optM12} extends to values of $\gamma\neq 1/2$ remains an open problem. We discuss it and provide evidence that if such an extension is possible it is far from being straightforward, see Proposition \ref{prop:radial}.

We end the paper by comparing our strong type results with those in \cite{GR}. In particular, we show that the sufficient condition for boundedness given in \cite{GR} is not strong enough to provide a positive answer to our open question on strong boundedness on the critical segment.



It is natural to ask whether analogous results may be proved on nonhomogeneous trees. Some of them extend quite naturally to certain classes of trees, for some others, the proofs are very specific to the homogeneous case and genuinely new approaches seem to be needed. This is work in progress.

In the previous pages as well as in those to come, we adopt the convention of writing $A\lesssim B$ if there exists a positive constant $c$, not depending on variables but possibly depending on parameters (which are the variables and which the parameters should be clear from time to time by the context) such that $A\leq c B$, and $A\approx B$ if it is both $A\lesssim B$ and $B\lesssim A$.


\section{Preliminaries}\label{sec:preliminaries}

Let $T$ be a homogeneous tree, i.e., a connected graph with no cycles where each vertex  has exactly  $k+1$ neighbors for some $k \ge 2$. Nothing will ever depend on the specific value of $k$, which we assume fixed once for all. We identify $T$ with its set of vertices and endow it with the standard graph distance $d$, counting the number of edges along the shortest path connecting two vertices. We fix an (arbitrary) distinguished point $o\in X$ and we abbreviate $d(o,x)$ with $\Vert x\Vert$. For every $x\in T$ and $r\in\mathbb N$ we denote by $B_r(x)$ the ball centered at $x$ of radius $r$, i.e., $B_r(x)=\{y\in T: d(y,x)\leq r\}$ and by $S_r(x)$ the sphere centered at $x$ of radius $r$, i.e., $S_r(x)=\{y\in T: d(y,x)= r\}$.
We endow $T$ with the counting measure and for every subset $E$ of $T$ we denote by $|E|$ its cardinality. Observe that $|B_r(x)|=|B_r(o)|\approx k^r$ for every $x\in T$ and $r\in \mathbb{N}$. It follows that the fraction maximal operator of parameter $\gamma>0$ \eqref{max} on $(T,|\cdot|)$ takes the form
$$
\mathcal M^{\gamma}f(x)=\sup_{r\in\mathbb N} \frac{1}{|B_r(x)|^{\gamma}}\sum_{y\in B_r(x)} |f(y)|=\sup_{r\in\mathbb N} \frac{1}{|B_r(o)|^{\gamma}}\sum_{y\in B_r(x)} |f(y)|,\qquad  f\in \mathbb{C}^T, x\in T.
$$
We are studying the mapping properties of $\mathcal{M}^\gamma$ between Lebesgue and Lorentz spaces on $T$. For every $p\in [1,\infty)$, we denote by $L^p$ the space of functions $f \in \mathbb C^T$ such that $\|f\|_{p}^p=\sum_{x\in T}|f(x)|^p<\infty$ and by $L^{\infty}$  the space of functions $f \in \mathbb C^T$ such that $\|f\|_{{\infty}}=\sup_{x\in T}|f(x)|<\infty$. For every $p\in[1,\infty]$, we denote by $p'$ its conjugate exponent, i.e., $1/p+1/p'=1$. We also introduce the Lorentz spaces $L^{p,s}$ and $L^{p,\infty}$ on $T$, which for $p\in [1,\infty)$ and $s \in [1,\infty)$ are defined by
$$
L^{p,s}=\Big\{f \in \mathbb C^T: \,\|f\|_{{p,s}}=\bigg(p\int_0^{+\infty}\lambda^s|\{x\in T: |f(x)|> \lambda\}|^\frac{s}{p} \frac{d\lambda}{\lambda}\bigg)^{1/s}<\infty\Big\},
$$
and 
$$
L^{p,\infty}=\{f \in \mathbb C^T: \,\|f\|_{{p,\infty}}=\sup_{\lambda>0} \lambda |\{x\in T: |f(x)|> \lambda\}|^{1/p} <\infty\}.
$$
By convention, we set $L^{\infty,\infty}=L^\infty$.  For any $p,q\in[1,\infty]$, we say that an operator is of strong (weak) type $(p,q)$ if it is bounded from $L^p$ to $L^q$ ($L^{q,\infty}$ respectively), and for any $p,q\in[1,\infty)$ we say that an operator is of \emph{restricted} weak type $(p,q)$ if it satisfies the weak
type $(p,q)$ condition when it is restricted to characteristic functions of
sets of finite measure. When $q>1$, this is equivalent to the boundedness from $L^{p,1}$ to $L^{q,\infty}$ (see \cite[Th. 3.13]{SW}).

Let us recall that discrete Lorentz spaces enjoy the following embedding property, which will be of good use in the next section.

\begin{lem}\label{lem:inclusions}
If  an operator is of restricted weak type $(p_0,q_0)$ for some $p_0,q_0 \in [1,\infty)$, then it is of restricted weak type (strong type) $(p,q)$ for every $1 \le p\le p_0$ and $q_0\le q\le \infty$ (respectively, for every $1 \le p< p_0$ and $q_0<q\le \infty$).  
\end{lem}
\begin{proof}
For what concerns the restricted weak type boundedness it suffices to recall that for any $p,p_0 \in [1,\infty)$ and $s\in[1,\infty]$, the continuous inclusion $L^{p,s} \hookrightarrow L^{p_0,s}$ holds if $p \le p_0$. The statement regarding the strong type boundedness then follows from the general Marcinkiewicz interpolation theorem  \cite[Theorem 5.3.2]{BL}.
\end{proof}

Another important auxiliary result is the following formula for the Lorentz space norm of a radial function on $T$, which follows from a result of Pytlik \cite{P} (see also \cite[Lemma A3]{CMS98}). Here, with harmless abuse of notation, we denote by $f(n)$ the value that a radial function $f$ takes on $S_n(o)$.

\begin{lem}\label{lem: P}
Let $f$ be a radial function on $T$. Then, for every $p \in [1,\infty)$ and $s\in [1,\infty]$,
$$
\Vert f\Vert_{p,s}\approx\Vert g\Vert_{L^s(\mathbb{N)}},
$$
where $g(n)=f(n)k^{n/p}.$
\end{lem}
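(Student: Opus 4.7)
The plan is to compute the distribution function of a radial $f$ and then match the resulting quantity against $\|g\|_{L^s(\mathbb{N})}$ by exploiting the exponential growth of the spheres. Assume without loss of generality that $f\ge 0$, and write $a_n=f(n)$ and $\sigma_n=|S_n(o)|\approx k^n$. The crucial feature is that for any $A\subseteq \mathbb{N}$,
$$\sum_{n\in A}\sigma_n\approx \sigma_{\max A},$$
since a geometric series is dominated by its last term. Applied to the super-level sets,
$$|\{x\in T:|f(x)|>\lambda\}|=\sum_{n:\,a_n>\lambda}\sigma_n\approx \sigma_{n(\lambda)},\qquad n(\lambda):=\max\{n:a_n>\lambda\}.$$

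Next I would linearize the rearrangement. Let $I=\{n:a_n>a_m\text{ for all }m>n\}$ and enumerate $I=\{n_1<n_2<\cdots\}$; then $a_{n_1}>a_{n_2}>\cdots$ strictly and $n(\lambda)=n_j$ for $\lambda\in (a_{n_{j+1}},a_{n_j}]$. Substituting into the definition of the Lorentz norm gives
$$\|f\|_{p,s}^s=p\int_0^\infty\lambda^{s-1}|\{|f|>\lambda\}|^{s/p}\,d\lambda\approx \sum_{j\ge 1}(a_{n_j}^s-a_{n_{j+1}}^s)\sigma_{n_j}^{s/p}.$$
Summation by parts rewrites this as $\sum_{j\ge 1}a_{n_j}^s(\sigma_{n_j}^{s/p}-\sigma_{n_{j-1}}^{s/p})$, with the convention $\sigma_{n_0}:=0$. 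Because $n_{j-1}<n_j$ forces $\sigma_{n_{j-1}}\le k^{-1}\sigma_{n_j}$, each increment satisfies $\sigma_{n_j}^{s/p}-\sigma_{n_{j-1}}^{s/p}\approx \sigma_{n_j}^{s/p}$, hence
$$\|f\|_{p,s}^s\approx \sum_{j\ge 1}a_{n_j}^s\sigma_{n_j}^{s/p}.$$

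Finally I would pass from the sum over $I$ to the full sum $\sum_n a_n^s\sigma_n^{s/p}=\|g\|_{L^s(\mathbb{N})}^s$. The inequality $\sum_{n_j\in I}a_{n_j}^s\sigma_{n_j}^{s/p}\le \sum_n a_n^s\sigma_n^{s/p}$ is trivial. For the converse, to each $n\notin I$ associate $n^*=\min\{m\ge n:m\in I\}$; then $a_n\le a_{n^*}$ and $\sigma_n\le k^{n-n^*}\sigma_{n^*}$, so grouping indices by their image $n^*$ yields
$$\sum_{n:\,n\mapsto n^*}a_n^s\sigma_n^{s/p}\le a_{n^*}^s\sigma_{n^*}^{s/p}\sum_{j\ge 0}k^{-js/p}\lesssim a_{n^*}^s\sigma_{n^*}^{s/p},$$
and summing over $n^*\in I$ closes the case $s<\infty$. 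The case $s=\infty$ follows from the same three-step reduction with $\sup$ in place of the integral and the sums, giving $\|f\|_{p,\infty}\approx \sup_n a_n\sigma_n^{1/p}=\|g\|_{L^\infty(\mathbb{N})}$.

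The main obstacle is the bookkeeping in the last step: one must handle indices outside the essentially decreasing subsequence $I$ by absorbing their contribution into that of a dominating index $n^*\in I$, which is only possible thanks to the geometric growth of $\sigma_n$. Without exponential growth this reduction would fail, which is precisely why the statement is specific to homogeneous trees (and why the weight $k^{n/p}$ appears so naturally in the definition of $g$).
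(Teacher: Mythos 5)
Your argument is correct in substance and, notably, is self-contained: the paper does not actually prove Lemma \ref{lem: P}, but derives it from a rearrangement result of Pytlik \cite{P} (see also \cite[Lemma A3]{CMS98}), which identifies the decreasing rearrangement of a radial function on $T$ up to constants and then reads off the Lorentz norm. Your route --- computing the distribution function directly via $|\{|f|>\lambda\}|\approx\sigma_{n(\lambda)}$, linearizing through the ``last record'' set $I$, and then absorbing the indices outside $I$ into their dominating successor $n^*\in I$ by geometric summation --- exploits exactly the same mechanism (exponential growth of spheres makes the radial profile essentially equivalent to the weighted sequence $g$), but replaces the external citation by an explicit computation; what it buys is transparency about where homogeneity enters, at the cost of some combinatorial bookkeeping. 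Three points deserve a line each to be airtight: (i) if some super-level set $\{n:a_n>\lambda\}$ is infinite then both sides of the equivalence are $+\infty$, so one may assume all super-level sets are finite, and it is this finiteness that guarantees $n(\lambda)$ and $n^*$ exist and that $I$ meets every tail $\{m\ge n\}$ on which $f$ does not vanish identically (with the convention $a_{n_{J+1}}=0$ when $I$ is finite); (ii) the intervals should be taken half-open as $[a_{n_{j+1}},a_{n_j})$ rather than $(a_{n_{j+1}},a_{n_j}]$, since $n(a_{n_j})\neq n_j$ --- harmless for the integral; (iii) the summation by parts for an infinite series needs the observation that the boundary term $a_{n_{J+1}}^s\sigma_{n_J}^{s/p}$ is dominated by a single term of the target series, so that the two nonnegative series converge or diverge together. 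None of these affects the validity of the argument.
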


When studying the mapping properties of $\mathcal{M}^\gamma$, two simple but crucial observations are in order.
\begin{oss}\label{remark 1}
For every $\gamma>0$, $\mathcal{M}^\gamma$ is unbounded from $L^p$ to $L^q$ when $p>q$, since the identity is not bounded on the same spaces and $\mathcal{M}^\gamma f \geq |f|$ pointwise, for every $\gamma>0$.
\end{oss}

\begin{oss}\label{remark 2}
For every $\gamma>0$, $r \in \mathbb N$, let $a_{r,\gamma}$ denote the radial function $|B_r(o)|^{-\gamma}{\chi}_{B_r(o)}$. Then, for every $f\in \mathbb{C}^T$ and every $r\in \mathbb N$ and  $x$ in $T$,
\begin{align*}
 \frac{1}{|B_r(o)|^{\gamma}}\sum_{y\in B_r(x)} f(y)&=\frac{1}{|B_r(o)|^{\gamma}}\sum_{n=0}^r\sum_{d(x,y)=n}f(y)\\&= \sum_{n=0}^\infty a_{r,\gamma}(n)\sum_{d(x,y)=n}f(y)=f\ast a_{r,\gamma}(x),
\end{align*}
where the convolution of $f$ with a radial function is defined in \cite[Formula (2.5)]{CMS98}. Hence,
\begin{equation}\label{convolution bound}
\mathcal M^{\gamma}f(x)=\sup_{r \in \mathbb N}|f|\ast a_{r,\gamma}(x)\lesssim |f|\ast a_\gamma(x)=:\mathcal{A}^\gamma |f|(x),\qquad  x\in T,
\end{equation}
 where $a_\gamma(x)=k^{-\gamma \Vert x\Vert }$.
\end{oss}

The two remarks alone are sufficient to give a full picture of the boundedness properties of $\mathcal{M}^{\gamma}$ when $\gamma>1$. Indeed, for $\gamma>1$ the kernel $a_\gamma\in L^p$ for every $p\in [1,\infty]$, and a simple application of Young's inequality gives us that the convolution operator $\mathcal{A}^\gamma$ is of strong type $(p,q)$ if $p\leq q$. Hence, it follows by the two remarks that, for $\gamma>1$, $\mathcal{M}^\gamma$ is of strong type $(p,q)$ if and only if $p\leq q$.
 
 The case $\gamma=1$ is much less trivial, but by now well understood. In this case, the $L^1$ to $L^1$ boundedness fails, since $\mathcal{M}\delta_o$ does not belong to $L^1$. Nevertheless, it was proved in \cite{CMS10, NT} that $\mathcal{M}$ is of weak type $(1,1)$. Since $\mathcal{M}$ is trivially bounded from $L^\infty$ to itself, it follows by interpolation, discrete $L^p$ spaces inclusions, and by Remark \ref{remark 1} that it is of strong type $(p,q)$ if and only if $1\leq p\leq q\neq 1$. 
 
The rest of the paper is devoted to the study of boundedness properties of fractional Hardy--Littlewood maximal operators $\mathcal{M}^\gamma$ for the remaining values $\gamma\in (0,1)$.

\section{Boundedness properties of $\mathcal{M}^\gamma$}\label{sec: boundedness}

The first two theorems we present should be considered as analogues of the endpoint results in \cite{CMS10,NT,V} for values of $\gamma$ not necessarily equal to $1$ or $1/2$.


 

\begin{thm}\label{complex interpolation}
  The operator $\mathcal{M}^{\gamma}$ is of restricted weak type $(1/(1-\gamma),1/\gamma)$, if $\gamma\in (0,1/2]$, and is bounded from $L^{1/(1-\gamma), 1/[2(1-\gamma)]}$ to $L^{1/(1-\gamma), \infty}$, if $\gamma\in (1/2,1]$.
\end{thm}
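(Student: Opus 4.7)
The plan is to apply a complex interpolation theorem for analytic families on Lorentz spaces to a linearized version of $\mathcal{M}^\gamma$. For each function $r\colon T\to\mathbb N$ and each $z\in\mathbb C$, introduce the linear operator
\[
T_r^z f(x)=\frac{1}{|B_{r(x)}(o)|^{z}}\sum_{y\in B_{r(x)}(x)} f(y).
\]
For every fixed $r$ the family $\{T_r^z\}_{z\in\mathbb C}$ is entire, and the pointwise estimate $|T_r^z f(x)|\leq |B_{r(x)}(o)|^{-\Re z}\sum_{y\in B_{r(x)}(x)} |f(y)|$ shows that the modulus is independent of $\Im z$, so the admissibility condition of Stein's theorem is immediate. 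Choosing for each $x$ some $r^*(x)\in\mathbb N$ that nearly attains the supremum in $\mathcal{M}^\gamma f(x)$, one obtains $\mathcal{M}^\gamma f\lesssim T_{r^*}^\gamma |f|$, so it is enough to prove bounds on $T_r^\gamma$ that are uniform in $r$.

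The boundary estimates for the family come for free from the same pointwise domination. At $\Re z=0$ one has $\|T_r^z f\|_{\infty}\leq \|f\|_{1}$ directly. At $\Re z=1/2$, the bound $|T_r^z f|\leq \mathcal{M}^{1/2}|f|$ combined with the restricted weak type $(2,2)$ theorem of Veca \cite{V} gives $\|T_r^z f\|_{2,\infty}\lesssim \|f\|_{2,1}$. At $\Re z=1$, $|T_r^z f|\leq \mathcal{M}^1 |f|\leq \|f\|_\infty$ yields the trivial $L^\infty\to L^\infty$ estimate. All three bounds are uniform in $r$ and in $\Im z$. For $\gamma\in(0,1/2]$, I would apply a Stein-style complex interpolation theorem for Lorentz pairs to the strip $0\leq \Re z\leq 1/2$ with $\theta=2\gamma$, whose exponents read off as
\[
\tfrac1{p_\theta}=1-\gamma,\quad \tfrac1{q_\theta}=\gamma,\quad \tfrac1{s_\theta}=1,\quad \tfrac1{t_\theta}=0,
\]
giving $T_r^\gamma\colon L^{1/(1-\gamma),1}\to L^{1/\gamma,\infty}$ uniformly in $r$, exactly the claimed restricted weak type. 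For $\gamma\in(1/2,1]$, I would interpolate instead on the strip $1/2\leq \Re z\leq 1$ with $\theta=2\gamma-1$; the exponents
\[
\tfrac1{p_\theta}=\tfrac1{q_\theta}=1-\gamma,\quad \tfrac1{s_\theta}=2(1-\gamma),\quad \tfrac1{t_\theta}=0
\]
match $L^{1/(1-\gamma),\,1/[2(1-\gamma)]}\to L^{1/(1-\gamma),\infty}$ on the nose.

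The main technical point I anticipate is the applicability of a Stein-type complex interpolation when the target endpoints are weak Lorentz spaces rather than genuine $L^p$ spaces: a clean route is to invoke the Lorentz-space version of the Stein--Weiss theorem available in \cite{BL}, or alternatively to run the interpolation at the level of characteristic functions at the $L^{2,1}\to L^{2,\infty}$ endpoint (which naturally encodes the restricted weak type) and then recover the Lorentz-space bound through Hunt's theorem. A secondary, minor point is the justification that $r^*(x)$ can be chosen as a genuine function: for $\gamma>0$ and $f$ of finite support the supremum defining $\mathcal{M}^\gamma f(x)$ is attained at some finite $r$, so $r^*$ is well defined pointwise, and the general case follows by density plus the Fatou-type lower semicontinuity of Lorentz norms.
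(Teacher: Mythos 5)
Your architecture is exactly the paper's: linearize $\mathcal{M}^\gamma$ via an arbitrary radius function (the paper additionally inserts a unimodular factor $\psi$ so that the supremum of the linearizations recovers $\mathcal{M}^{\Re z/2}f$ for complex $f$, but applying the interpolated bound to $|f|$ as you do serves the same purpose), verify the three boundary estimates $L^1\to L^\infty$, $L^{2,1}\to L^{2,\infty}$ (Veca) and $L^\infty\to L^\infty$ uniformly in the linearizing data and in $\Im z$, and complex-interpolate the analytic family on the two strips. Your exponent bookkeeping on both strips agrees with the paper's, which runs the second strip as a shifted family $\Tilde{\mathcal{T}}_z=\mathcal{T}_{z+1}$ on the same unit strip.

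The one genuine soft spot is precisely the point you flag: which interpolation theorem for analytic families applies when the endpoints are Lorentz couples. The paper resolves this with Cwikel and Janson's theorem \cite[Theorem 2]{CJ}, mapping the lower complex method $[A_0,A_1]_\theta$ into the \emph{upper} method $[B_0,B_1]^\theta$ (the upper method is what identifies $[L^\infty,L^{2,\infty}]^\theta$ with $L^{2/\theta,\infty}$), after checking the $H^\infty$ hypothesis by Morera's theorem together with the uniform pointwise bounds you also record. Of your two proposed substitutes, the first (a Lorentz-space Stein--Weiss statement) is in the right spirit but is not literally available in \cite{BL}; the second --- recovering the conclusion ``through Hunt's theorem'' --- cannot work. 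Real interpolation applies to a single operator, whereas here the operator changes with $z$, and even setting that aside it only produces strong-type bounds strictly between the endpoints: on the second strip those interior points lie on the diagonal $p=q=1/(1-\gamma)$, where strong type actually \emph{fails} by Proposition \ref{counterexample 2}. So the borderline estimate $L^{1/(1-\gamma),1/[2(1-\gamma)]}\to L^{1/(1-\gamma),\infty}$ is accessible only through the complex method for analytic families, and your proof is complete only once a Cwikel--Janson-type theorem is supplied.
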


\begin{proof}
The case $\gamma=1$ is trivial, while the case $\gamma=1/2$ was proved in \cite[Theorem 5.1]{V}. Assume now $\gamma\neq 1,1/2$. 
For any couple of functions $\phi: T \to (0,\infty)$, $\psi: T \to \{z \in \mathbb C \ : \ |z|=1\}$, and any $z \in \mathbb C$, we define a linear operator $\mathcal{T}_{z,\phi,\psi}$ which acts on  $f\in \mathbb C^T$ as
\begin{align*}
    \mathcal{T}_{z,\phi,\psi}f(x)= \frac{1}{|B_{\phi(x)}(o)|^{z/2}} \sum_{y \in B_{\phi(x)}(x)} f(y)\psi(y).
\end{align*}
It is easy to see that for any $z\in \mathbb{C}$ and $x\in T$,
\begin{align}\label{supmax}
     \sup_{\phi,\psi}|\mathcal{T}_{z,\phi,\psi}f(x)|=\mathcal{M}^{\mathrm{Re}z/2}f(x),
\end{align}  where the supremum is taken over all functions $\phi, \psi$ as above. Hence, if $\mathcal{T}_{2\gamma,\phi,\psi}$ 
is bounded 
from $L^{p,s}$ to $L^{q,t}$  with operator norm which does not depend on $\phi$ and $\psi$, then for every $\varepsilon>0$ we may find functions $\phi$ and $\psi$, and a constant $C$ not depending on them, such that
\begin{equation*}
    \|\mathcal{M}^\gamma f\|_{q,t} -\varepsilon\leq \|\mathcal{T}_{2\gamma,\phi,\psi} f\|_{q,t} \leq C \|f\|_{p,s},
\end{equation*}
and by letting $\varepsilon \to 0^+$, we obtain that also $\mathcal{M}^\gamma$ is bounded from $L^{p,s}$ to $L^{q,t}$. Thus, it is enough to prove that  $\mathcal{T}_{2\gamma,\phi,\psi}$ is of restricted weak type $(1/(1-\gamma),1/\gamma)$, if $\gamma\in (0,1/2]$, and is bounded from $L^{1/(1-\gamma), 1/[2(1-\gamma)]}$ to $L^{1/(1-\gamma), \infty}$, if $\gamma\in (1/2,1)$, with operator norm that does not depend on $\phi$ and $\psi$.

To prove this, fix two functions $\phi: T \to (0,\infty)$, $\psi: T \to \{z \in \mathbb C \ : \ |z|=1\}$, set $\mathcal{T}_z=\mathcal{T}_{z,\phi,\psi}$ and $\Tilde{\mathcal{T}}_{z}=\mathcal{T}_{z+1}$, and consider the families of linear operators $\{\mathcal{T}_z\}_{z \in \overline{S}}$ and $\{\Tilde{\mathcal{T}}_{z}\}_{z\in \overline{S}}$, where $\overline{S}$ denotes the closure of the strip $S=\{0<\Re{z}< 1\}$. We aim to apply Cwikel and Janson's complex interpolation result \cite[Theorem 2]{CJ} to these families of operators and, respectively, to the spaces, $A_0=L^1$, $A_1=L^{2,1}$,  $B_0= L^{\infty}$, $B_1=L^{2,\infty}$ and $\Tilde{A_0}=L^{2,1}, \Tilde{A_1}=L^\infty, \Tilde{B_0}=L^{2,\infty}, \Tilde{B_1}=L^\infty$.

Observe that $A:=A_0\cap A_1=A_0$ and $\Tilde{A}:=\Tilde{A_0}\cap\Tilde{A_1}=\Tilde{A_0}$. We also set $B^+=L^1$.
It is easy to see that $\|\mathcal{M}^0 f\|_{\infty} = \|f\|_{1}$ for every $f \in L^1$. Moreover we know that $\mathcal{M}$ is of strong type $(\infty,\infty)$ and, by \cite[Theorem 5.1]{V}, that $\mathcal{M}^{1/2}$ is of restricted weak type $(2,2)$. By means of \eqref{supmax}, it follows that $\mathcal{T}_z$ is bounded from $A_0$ to $B_0$ when $\mathrm{Re}z=0$, and from $A_1$ to $B_1$ when $\mathrm{Re}z=1$, and $\Tilde{\mathcal{T}}_{z}$ is bounded from $\Tilde{A}_0$ to $\Tilde{B}_0$ when $\Re{z=0}$ and from $\Tilde{A}_1$ to $\Tilde{B}_1$ when $\Re{z=1}$.

We now observe that for every $b^+ \in B^+, a\in A, \Tilde{a} \in \Tilde{A}$, the two functions
\begin{align*}
    z \mapsto\langle b^+, \mathcal{T}_z a \rangle, \quad \text{and} \quad  z \mapsto\langle b^+, \Tilde{\mathcal{T}}_z \Tilde{a} \rangle,
\end{align*}
both belong to $H^\infty(\{\Re{z}\geq 0\})$, the space of bounded analytic functions on $\{\Re{z}> 0\}$ which are continuous on $\{\Re{z}\geq 0\}$. Indeed, a straightforward application of Morera's Theorem shows that both functions are entire. Moreover, for $\Re{z}\geq 0$ we have
\begin{align*}
  | \langle b^+, \mathcal{T}_z a \rangle |\le \sum_{x \in T} |b^+(x)|\mathcal{M}^0a(x) = \|b^+\|_1\|a\|_1,
\end{align*}
and
\begin{equation*}
    \begin{split}
        | \langle b^+, \Tilde{\mathcal{T}}_{z} \Tilde{a} \rangle |&\le \sum_{x \in T} |b^+(x)|\mathcal{M}^{1/2}\Tilde{a}(x) \leq \Vert \mathcal{M}^{1/2}\Tilde{a}\Vert_{2,\infty} \Vert b^+\Vert_{2,1}\leq \Vert \mathcal{M}^{1/2}\Vert_{L^{2,1}\to L^{2,\infty}}\Vert\Tilde{a}\Vert_{2,1} \Vert b^+\Vert_{1},
    \end{split}
\end{equation*}
where in the last two steps we used Hölder's inequality, Veca's theorem and the continuous inclusion of $L^{2,1}$ into $L^1$. It follows, in particular, that $z \mapsto\langle b^+, \mathcal{T}_z a \rangle$ and $z \mapsto\langle b^+, \Tilde{\mathcal{T}}_z \Tilde{a} \rangle$ belong to $H^\infty(\overline{S})$. Therefore, \cite[Theorem 2]{CJ} applies to both the families  $\{\mathcal{T}_{z}\}_{z\in \overline{S}}$ and $\{\Tilde{\mathcal{T}}_{z}\}_{z\in \overline{S}}$, and gives that for every $\theta \in (0,1)$
\begin{align*}
    \mathcal{T}_\theta: [A_0,A_1]_\theta \to [B_0,B_1]^\theta, \quad      \Tilde{\mathcal{T}}_\theta: [\Tilde{A}_0,\Tilde{A}_1]_\theta \to [\Tilde{B}_0, \Tilde{B}_1]^\theta,
\end{align*}
or equivalently (see for instance \cite{BL})
\begin{align*}
\mathcal{T}_\theta: L^{2/(2-\theta),1}\to L^{2/\theta,\infty}, \quad
    \Tilde{\mathcal{T}}_\theta: L^{2/(1-\theta),1/(1-\theta)} \to L^{2/(1-\theta),\infty},
\end{align*}
with operator norm not depending on $\phi,\psi$. Recalling that $\Tilde{\mathcal{T}}_\theta=\mathcal{T}_{\theta+1}$, and setting $\gamma=\theta/2$ then one has
\begin{equation*}
\mathcal{T}_{2\gamma}:
    \begin{cases}
    L^{1/(1-\gamma),1}\to L^{1/\gamma,\infty}, &\gamma\in (0,1/2)\\
    L^{1/(1-\gamma),1/[2(1-\gamma)]} \to L^{1/(1-\gamma),\infty}, &\gamma\in (1/2,1),
    \end{cases}
\end{equation*}
with operator norm not depending on $\phi,\psi$. This completes the proof.
\end{proof}

Next theorem gives us a supplementary endpoint result which is not possible to obtain by means of the above complex interpolation argument. To prove it we exploit a sharpened version of the Kunze-Stein phenomenon for Lorentz spaces on the homogeneous tree \cite[Theorem 1]{CMS98}.

\begin{thm}\label{p: homtree}
If $\gamma\in [1/2,1]$ the maximal operator $\mathcal M^{\gamma}$ is of restricted weak type $(1/\gamma,1/\gamma)$.  
\end{thm}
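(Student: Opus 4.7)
The plan is to reduce the restricted weak-type bound to a Lorentz convolution estimate for the kernel $a_\gamma$ and then invoke the sharpened Kunze--Stein inequality from \cite[Theorem 1]{CMS98}. The endpoint values are already in hand: for $\gamma=1$ the statement is weaker than the full weak-type $(1,1)$ of $\mathcal M$ proved in \cite{CMS10,NT}, while for $\gamma=1/2$ it is exactly Veca's theorem \cite{V}. So I focus on $\gamma\in(1/2,1)$, where $p:=1/\gamma\in(1,2)$.

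The starting point is the pointwise domination
$$\mathcal M^{\gamma}f(x)\lesssim |f|\ast a_\gamma(x),\qquad a_\gamma(x)=k^{-\gamma\|x\|},$$
from Remark \ref{remark 2}. Since $a_\gamma$ is radial and $a_\gamma(n)k^{n/p}=k^{n(\gamma-\gamma)}=1$, Lemma \ref{lem: P} yields $\|a_\gamma\|_{1/\gamma,\infty}\lesssim 1$, i.e.\ $a_\gamma$ sits exactly on the weak endpoint $L^{1/\gamma,\infty}$. The Lorentz-space refinement of the Kunze--Stein phenomenon in \cite[Theorem 1]{CMS98} provides, for $p\in(1,2)$ and any radial $g$, a convolution bound of the shape
$$\|f\ast g\|_{p,\infty}\lesssim \|f\|_{p,1}\,\|g\|_{p,\infty}.$$
Applying this with $p=1/\gamma$, $g=a_\gamma$, and $f=\chi_E$ for any finite $E\subseteq T$ gives
$$\|\mathcal M^{\gamma}\chi_E\|_{1/\gamma,\infty}\lesssim \|\chi_E\|_{1/\gamma,1}\,\|a_\gamma\|_{1/\gamma,\infty}\lesssim |E|^{\gamma},$$
which, via the characterization of restricted weak type $(1/\gamma,1/\gamma)$ as boundedness from $L^{1/\gamma,1}$ to $L^{1/\gamma,\infty}$ recalled in Section \ref{sec:preliminaries}, is exactly the claim.

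I do not anticipate a serious obstacle. All the work is done by \cite[Theorem 1]{CMS98}, and the computation of $\|a_\gamma\|_{1/\gamma,\infty}$ is immediate from Lemma \ref{lem: P}; the only thing to be careful about is that the kernel $a_\gamma$ lies precisely on the weak endpoint for $p=1/\gamma$, which is why the sharpened Lorentz version of Kunze--Stein (rather than its plain $L^p$ formulation) is exactly what is needed.
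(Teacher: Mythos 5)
Your proposal is correct and follows essentially the same route as the paper: dominate $\mathcal M^\gamma$ pointwise by convolution with $a_\gamma$ (Remark \ref{remark 2}), compute $\|a_\gamma\|_{1/\gamma,\infty}\approx 1$ via Lemma \ref{lem: P}, and conclude with the Lorentz-space Kunze--Stein inequality of \cite[Theorem 1]{CMS98}, treating $\gamma=1$ and $\gamma=1/2$ by citation. No gaps.
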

\begin{proof}
The case $\gamma=1$ follows by \cite{CMS10, NT}, and the case $\gamma=1/2$ was proved in \cite[Theorem 5.1]{V}. Assume now $\gamma\in (1/2,1)$. By Remark \ref{remark 2}, it is enough to prove the result for the linear convolution operator 
$$
\mathcal A^{\gamma}f(x)= f\ast a_\gamma(x),\qquad  x\in T.
$$ 
By Lemma \ref{lem: P}, $\|a_\gamma\|_{L^{1/\gamma,\infty}} \approx \sup_{n \in \mathbb N} k^{-\gamma n}k^{\gamma n}=1$.
Hence, by \cite[Theorem 1]{CMS98} we deduce that 
$$
\|\mathcal A^{\gamma}f\|_{{1/\gamma,\infty}}\lesssim \|f\|_{{1/\gamma,1}} \|a_\gamma\|_{{1/\gamma,\infty}}\lesssim \|f\|_{{1/\gamma,1}},
$$
i.e., that $\mathcal{A}^\gamma$ is of restricted weak type $(1/\gamma,1/\gamma)$.
\end{proof}

\begin{oss}
Observe that from Theorem \ref{p: homtree} one can easily deduce, arguing by duality, that $\mathcal M^\gamma$ is of restricted weak type $(1/(1-\gamma),1/(1-\gamma))$ for $\gamma\in (1/2,1)$. Indeed, it is easily seen that the linear operator $\mathcal A^\gamma$ is self-adjoint. Hence, by Hölder's inequality for Lorentz spaces,
\begin{equation*}
    |\langle \mathcal A^\gamma,g \rangle| = |\langle  f, \mathcal A^\gamma g \rangle|\leq \Vert f\Vert_{1/(1-\gamma),1} \Vert \mathcal A^\gamma g\Vert_{1/\gamma,\infty}\lesssim \Vert f\Vert_{1/(1-\gamma),1} \Vert  g\Vert_{1/\gamma,1}.
\end{equation*}
Passing to the supremum over all functions $g$ with $\Vert g\Vert_{1/\gamma,1}\leq 1$, we obtain
\begin{equation*}
   \Vert \mathcal A^\gamma f\Vert_{1/(1-\gamma),\infty}\lesssim \Vert f\Vert_{1/(1-\gamma),1}.
\end{equation*}
While this endpoint result would be sufficient for interpolation purposes, we remark that it is weaker than the result for $\gamma\in (1/2,1)$ obtained in Theorem \ref{complex interpolation}, since $L^{1/(1-\gamma),1}\not\subset L^{1/(1-\gamma),1/[2(1-\gamma)]}$ when $\gamma>1/2$.
\end{oss}

Thanks to the endpoint results obtained so far we can deduce strong type estimates.

\begin{thm}\label{strong large gamma}
Let $\gamma\in (0,1]$. Then, $\mathcal M^{\gamma}$ is of strong type $(p,q)$ if one of the following conditions hold:
\begin{itemize}
    \item[(i)] $1\leq p\leq q\leq\infty$ and $q>1/\gamma$ and $p<1/(1-\gamma)$.
    \item[(ii)] $p=1/(1-\gamma)$ and $q=\infty$.
\end{itemize}
\end{thm}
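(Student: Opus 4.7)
The plan is to dispose of (ii) directly via Hölder's inequality, and to split (i) into the cases $\gamma\in(0,1/2]$ and $\gamma\in(1/2,1]$, each calling for a different combination of the endpoint results of Theorems \ref{complex interpolation} and \ref{p: homtree}. For (ii), Hölder's inequality with the conjugate pair $(1/(1-\gamma),1/\gamma)$ gives $\sum_{y\in B_r(x)}|f(y)|\leq |B_r(x)|^{\gamma}\,\Vert f\Vert_{1/(1-\gamma)}$ for any $f\in L^{1/(1-\gamma)}$; dividing by $|B_r(o)|^\gamma$ and taking the supremum in $r$ yields $\Vert\mathcal{M}^\gamma f\Vert_\infty\lesssim\Vert f\Vert_{1/(1-\gamma)}$.

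For (i) with $\gamma\in(0,1/2]$ the argument is short. Since $1/(1-\gamma)\leq 1/\gamma$, the endpoint of Theorem \ref{complex interpolation} lies at or above the diagonal in the $(1/p,1/q)$-plane, so its restricted weak type $(1/(1-\gamma),1/\gamma)$ can be upgraded directly by Lemma \ref{lem:inclusions} to strong type $(p,q)$ for every $1\leq p<1/(1-\gamma)$ and $q>1/\gamma$; the constraint $p\leq q$ in (i) is then automatic because $p<1/(1-\gamma)\leq 1/\gamma<q$.

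The harder case is (i) with $\gamma\in(1/2,1]$, because the endpoint of Theorem \ref{complex interpolation} now sits on the diagonal. The plan is to first secure strong type on the diagonal via Marcinkiewicz interpolation, and then extend off-diagonal using the discrete $L^p$ inclusions on $T$. From Theorem \ref{p: homtree} we have restricted weak type $(1/\gamma,1/\gamma)$, and from the Remark following Theorem \ref{p: homtree} (alternatively from Theorem \ref{complex interpolation}) restricted weak type $(1/(1-\gamma),1/(1-\gamma))$. Marcinkiewicz interpolation between these two diagonal endpoints yields strong type $(p_0,p_0)$ for every $p_0\in(1/\gamma,1/(1-\gamma))$. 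Given any $(p,q)$ satisfying (i), the conditions $p<1/(1-\gamma)$ and $q>1/\gamma$ ensure that $[p,q]\cap(1/\gamma,1/(1-\gamma))\neq\emptyset$, so one may pick $p_0$ in this intersection. On $T$ equipped with the counting measure one has $\Vert f\Vert_q\leq\Vert f\Vert_p$ whenever $p\leq q$, which gives the discrete inclusions $L^p\hookrightarrow L^{p_0}\hookrightarrow L^q$. Chaining an inclusion, the diagonal estimate $\mathcal{M}^\gamma\colon L^{p_0}\to L^{p_0}$, and another inclusion then produces strong type $(p,q)$. For $\gamma=1$ the same recipe applies with $1/(1-\gamma)$ read as $\infty$ and the restricted weak type at $(\infty,\infty)$ replaced by the trivial strong type $L^\infty\to L^\infty$.

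The main obstacle is the off-diagonal extension in the $\gamma>1/2$ regime. Marcinkiewicz interpolation by itself yields strong type only along a one-parameter diagonal family, and in a continuous infinite-measure setting one could not pass from this to a full two-dimensional region of $(p,q)$. Here the argument essentially exploits that $T$ is discrete with atoms of size one, so the $\ell^p$ scale is nested ($\ell^p\subset\ell^q$ for $p\leq q$), providing exactly the extra flexibility needed to convert the diagonal estimate into the full strong type range prescribed by (i).
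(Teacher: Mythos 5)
Your proof is correct and follows essentially the same route as the paper's: H\"older's inequality for (ii), Lemma \ref{lem:inclusions} applied to the endpoint of Theorem \ref{complex interpolation} for (i) when $\gamma\le 1/2$, and for $\gamma>1/2$ diagonal interpolation between the two endpoint results followed by the discrete $L^p$-inclusions to move off the diagonal. The only (immaterial) deviation is that you interpolate against the duality-derived restricted weak type $(1/(1-\gamma),1/(1-\gamma))$ endpoint rather than the stronger Lorentz bound of Theorem \ref{complex interpolation}; the paper's own remark already notes that this weaker endpoint suffices for interpolation purposes.
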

\begin{proof}
For $\gamma=1$ the result boils down to the $(p,q)$ strong boundedness of $\mathcal{M}$ for $1\leq p\leq q\neq 1$ discussed in Section \ref{sec:preliminaries}. 

If $\gamma \in (0,1/2]$ it follows from Theorem \ref{complex interpolation} and Lemma \ref{lem:inclusions} that $\mathcal M^{\gamma}$ is of strong type $(p,q)$ when $p< 1/(1-\gamma)$ and $q>1/\gamma$.
On the other hand, if $\gamma\in (1/2,1)$, combining Theorem \ref{complex interpolation} and Theorem \ref{p: homtree}, by interpolation we have that $\mathcal M^{\gamma}$ is of strong type $(t,t)$ (and hence of strong type $(p,q)$ with $p\leq t\leq q$), for $1/\gamma<t<1/(1-\gamma)$. It follows that $\mathcal M^{\gamma}$ is of strong type $(p,q)$ whenever $1\leq p\leq q\leq\infty$ with $p< 1/(1-\gamma)$ and $q>1/\gamma$. This proves (i).

To prove (ii), observe that for every $x \in T$ and $r\in \mathbb N,$ by  H\"older's inequality with exponents $p=1/(1-\gamma)$ and $p'=1/\gamma$, we obtain
\begin{align*}
    \frac{1}{|B_r(o)|^\gamma} \sum_{y \in B_r(x)} |f(y)| \le \|f\|_{1/(1-\gamma)}.
\end{align*}
Passing to the supremum on $r\in \mathbb{N}$, we get that $\mathcal{M}^\gamma f(x) \le \|f\|_{1/(1-\gamma)}$ for every $x \in T$, hence $\mathcal{M}^{\gamma}$ is of strong type $(1/(1-\gamma),\infty)$. 
\end{proof}

Let us remark that (i) above, which we deduce as a straightforward consequence of our endpoint results, for values of $\gamma\in[1/2,1]$ also follows from a more general theorem by Cowling, Meda and Setti \cite[Theorem 3.4]{CMS00}.

\section{Optimality results}\label{sec:opt}

In this section we discuss the optimality of the results obtained in Section \ref{sec: boundedness} under different points of view. Recall that Theorems \ref{complex interpolation} and \ref{p: homtree} give values of $(p,q)$ such that $\mathcal{M}^\gamma$ is bounded from $L^{p,s}$ to $L^{q,t}$ when $s=1$ and $t=\infty$. Our first optimality result shows that these results fail if $t\neq \infty$, for any fixed $s\in[1,\infty]$. 

\begin{prop}\label{counterexample 2} Let $\gamma \in (0,1)$. If $t \in [1,\infty)$, then $\mathcal{M}^{\gamma}$ is unbounded from $L^{1/(1-\gamma),s}$ to $L^{1/(1-\gamma),t}$ and from $L^{p,s}$ to $L^{1/\gamma,t}$, for every $p \in [1,\infty)$ and $s\in[1,\infty]$.
\end{prop}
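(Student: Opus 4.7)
The plan is to exhibit, in both cases, a simple radial test function whose maximal function can be explicitly lower bounded by a radial function, and then use Lemma~\ref{lem: P} to reduce the Lorentz quasi-norms to weighted $\ell^{t}(\mathbb{N})$ norms.

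For the unboundedness into $L^{1/\gamma,t}$ I would take $f=\delta_{o}$, which has finite $L^{p,s}$-quasi-norm for every $p\in[1,\infty)$ and $s\in[1,\infty]$. Since $o\in B_{r}(x)$ exactly when $r\ge\|x\|$,
\[
\mathcal{M}^{\gamma}\delta_{o}(x)=\sup_{r\ge\|x\|}|B_{r}(o)|^{-\gamma}\approx k^{-\gamma\|x\|}.
\]
This function is radial, so Lemma~\ref{lem: P} applied with $p=1/\gamma$ gives
\[
\|\mathcal{M}^{\gamma}\delta_{o}\|_{1/\gamma,t}\approx \bigl\|k^{-\gamma n}\,k^{\gamma n}\bigr\|_{L^{t}(\mathbb{N})}=\|1\|_{L^{t}(\mathbb{N})}=\infty
\]
for every $t\in[1,\infty)$, which rules out the desired boundedness.

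For the unboundedness into $L^{1/(1-\gamma),t}$ I would use the family $f_{N}=\chi_{B_{N}(o)}$, $N\in\mathbb{N}$. The standard formula for the Lorentz quasi-norm of a characteristic function gives $\|f_{N}\|_{1/(1-\gamma),s}\approx k^{(1-\gamma)N}$, uniformly in $s\in[1,\infty]$. For any $x$ with $\|x\|=n\le N$ the triangle inequality yields $B_{N-n}(x)\subseteq B_{N}(o)$, so plugging $r=N-n$ into the definition of $\mathcal{M}^{\gamma}$ produces
\[
\mathcal{M}^{\gamma}f_{N}(x)\ge \frac{|B_{N-n}(x)|}{|B_{N-n}(o)|^{\gamma}}\approx k^{(1-\gamma)(N-n)}.
\]
Setting $g(y):=k^{(1-\gamma)(N-\|y\|)}\chi_{B_{N}(o)}(y)$, the pointwise inequality $\mathcal{M}^{\gamma}f_{N}\ge g$ together with the monotonicity of the Lorentz quasi-norm with respect to the distribution function yields $\|\mathcal{M}^{\gamma}f_{N}\|_{1/(1-\gamma),t}\gtrsim\|g\|_{1/(1-\gamma),t}$. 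Applying Lemma~\ref{lem: P} to the radial function $g$ with $p=1/(1-\gamma)$ then gives
\[
\|g\|_{1/(1-\gamma),t}^{t}\approx \sum_{n=0}^{N}k^{(1-\gamma)(N-n)t}\,k^{n(1-\gamma)t}=(N+1)\,k^{N(1-\gamma)t},
\]
whence $\|\mathcal{M}^{\gamma}f_{N}\|_{1/(1-\gamma),t}\gtrsim N^{1/t}\,\|f_{N}\|_{1/(1-\gamma),s}$. Letting $N\to\infty$ completes the argument.

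The computations are elementary; the only genuine choice is that of the test functions. The delta at the origin is dictated by the target exponent $1/\gamma$, for which $k^{-\gamma n}k^{\gamma n}\equiv 1$ sits precisely on the failure boundary for Lemma~\ref{lem: P}, while the expanding balls $B_{N}(o)$ are designed so that $\mathcal{M}^{\gamma}f_{N}$ enjoys an almost self-similar plateau on $B_{N}(o)$, producing the polynomial-in-$N$ blow-up $N^{1/t}$ that defeats any finite secondary index $t$. The only mildly delicate point is the lower bound inside $B_{N}(o)$; once one realizes that every interior point is the center of a ball still contained in $B_{N}(o)$, the rest is bookkeeping.
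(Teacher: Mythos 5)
Your proposal is correct and follows essentially the same route as the paper: the Dirac delta at $o$ for the $L^{1/\gamma,t}$ target and the expanding balls $\chi_{B_N(o)}$ for the diagonal case, with the same lower bound $\mathcal{M}^\gamma\chi_{B_N(o)}(x)\gtrsim k^{(1-\gamma)(N-\|x\|)}$ on $B_N(o)$ and the same reduction to weighted $\ell^t(\mathbb{N})$ norms via Lemma~\ref{lem: P}. The computations and the resulting $N^{1/t}$ blow-up match the paper's proof exactly.
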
 
\begin{proof} For every $n \in \mathbb N$ set $f_n=\chi_{B_n(o)}$. It is easy to see that 
\begin{align*}
    \|f_n\|_{p,s} = |B_n(o)|^{1/p}, \qquad  s \in [1,\infty].
\end{align*}
For any $x \in B_n(o)$,
\begin{align*}
    \mathcal{M}^\gamma f_n(x)= \sup_{r \in \mathbb N}\frac{|B_r(x) \cap B_n(o)|}{|B_r(o)|^\gamma}   \ge \frac{|B_{n-\Vert x\Vert}(x)|}{|B_{n-\Vert x\Vert}(o)|^\gamma}\approx k^{(n-\|x\|)(1-\gamma)}.
\end{align*}
Since $\varphi_n(x):= f_n(x)k^{(n-\|x\|)(1-\gamma)}$ is radial, we may define $g_n(j)=\varphi_n(j)k^{j/p}$ and by Lemma \ref{lem: P}, for any $t\in [1,\infty)$ we have 
\begin{align*}
    \| \mathcal{M}^\gamma f_n\|_{{1/(1-\gamma),t}} &\gtrsim\|g_n \Vert_{L^t(\mathbb{N})}= \bigg(\sum_{j=0}^n k^{t(n-j)(1-\gamma)}k^{jt(1-\gamma)}\bigg)^{1/t}= n^{1/t}k^{n(1-\gamma)}. 
\end{align*}
It follows that for $t\in [1,\infty)$
\begin{align*}
  \frac{\| \mathcal{M}^\gamma f_n\|_{{1/(1-\gamma),t}}}{\|f_n\|_{{1/(1-\gamma),s}}}\gtrsim n^{1/t} \longrightarrow \infty, \qquad\mathrm{as} \ n \to \infty,
\end{align*}
which proves the first assertion. To prove the second, observe that if we apply $\mathcal{M}^\gamma$ to a Dirac delta centered at $o$ we get
\begin{align*}
    \mathcal{M}^\gamma\delta_o(x)= \frac{1}{|B_{\Vert x\Vert }(o)|^{\gamma}}, \qquad x \in T.
\end{align*} It follows by Lemma \ref{lem: P} that
\begin{align*}
    \|\mathcal{M}^\gamma \delta_o\|_{1/\gamma,t} \approx \bigg(\sum_{j=0}^\infty \frac{k^{j\gamma t}}{|B_{j }(o)|^{\gamma t}} \bigg)^{1/t}\approx \bigg(\sum_{j=0}^\infty 1 \bigg)^{1/t} =\infty,
\end{align*}and this concludes the proof.
\end{proof}
Now, we discuss the optimality of the values of $(p,q)$ for which we have established strong and restricted weak type boundedness of $\mathcal{M}^\gamma$.
\begin{prop}\label{optaxis}
Let $\gamma\in (0,1)$. The fractional maximal operator $\mathcal{M}^\gamma$ is not of strong type $(p,q)$ if either $q<p$, or $p=q=1/(1-\gamma)$, or $q \leq 1/\gamma$, or $p>1/(1-\gamma)$.
\end{prop}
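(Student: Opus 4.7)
The plan is to treat the four excluded regimes by producing an explicit counterexample in each. The case $q<p$ is immediate from Remark \ref{remark 1}, which reduces the question to the unboundedness of the identity from $L^p$ to $L^q$ in this range on the counting measure. The case $p=q=1/(1-\gamma)$ is already covered by Proposition \ref{counterexample 2} applied with $s=t=1/(1-\gamma)\in[1,\infty)$: the characteristic functions $f_n=\chi_{B_n(o)}$ used there witness the unboundedness on the diagonal.

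For $q\le 1/\gamma$ I would test against $\delta_o$, which lies in every $L^p$. A direct computation gives $\mathcal{M}^\gamma \delta_o(x)\approx k^{-\gamma\|x\|}$, and Lemma \ref{lem: P} then yields $\|\mathcal{M}^\gamma\delta_o\|_q^q\approx\sum_{n\ge 0}k^{n(1-\gamma q)}$, which diverges precisely when $\gamma q\le 1$. This part is essentially the second computation already carried out inside the proof of Proposition \ref{counterexample 2}, so I would simply invoke that estimate.

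The remaining case, $p>1/(1-\gamma)$, is the main obstacle and requires a new test function. Since $1/p<1-\gamma$, the interval $(1/p,1-\gamma)$ is nonempty; I would pick any $\alpha$ in it and set $f(x)=k^{-\alpha\|x\|}$. The lower bound $\alpha>1/p$ ensures $\|f\|_p^p\approx\sum_{n\ge 0}k^{n(1-\alpha p)}<\infty$. On the other hand, for every $x\in T$ and every $r>\|x\|$ the ball $B_r(x)$ contains $B_{r-\|x\|}(o)$, and therefore
\begin{equation*}
\mathcal{M}^\gamma f(x)\gtrsim\sup_{r>\|x\|}\frac{1}{|B_r(o)|^\gamma}\sum_{y\in B_{r-\|x\|}(o)}k^{-\alpha\|y\|}\approx\sup_{r>\|x\|}k^{-\|x\|(1-\alpha)}k^{r(1-\alpha-\gamma)},
\end{equation*}
where the second estimate uses the radial summation formula behind Lemma \ref{lem: P}. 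The upper bound $\alpha<1-\gamma$ makes the exponent in $r$ strictly positive, forcing $\mathcal{M}^\gamma f\equiv+\infty$; in particular, no strong $(p,q)$ bound can hold. The delicate point is the selection of this test function—its decay rate must be slow enough to make $\mathcal{M}^\gamma f$ diverge at every point yet fast enough to keep $f$ in $L^p$—but once $\alpha$ is placed in the correct range, all the verifications are elementary.
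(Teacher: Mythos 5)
Your proposal is correct, and three of its four cases coincide with the paper's argument: the case $q<p$ is Remark \ref{remark 1}, and the cases $p=q=1/(1-\gamma)$ and $q\le 1/\gamma$ are exactly the two computations of Proposition \ref{counterexample 2} (with the $\chi_{B_n(o)}$ sequence and with $\delta_o$, respectively), which the paper likewise just invokes. The only genuine divergence is in the case $p>1/(1-\gamma)$. The paper reduces, via the discrete inclusion $L^q\hookrightarrow L^\infty$, to unboundedness from $L^p$ to $L^\infty$, and reuses the same sequence $f_n=\chi_{B_n(o)}$: since $\mathcal{M}^\gamma f_n(o)\ge |B_n(o)|^{1-\gamma}$ and $\|f_n\|_p=|B_n(o)|^{1/p}$, the ratio $|B_n(o)|^{1-\gamma-1/p}$ blows up. You instead build a single radial function $f(x)=k^{-\alpha\|x\|}$ with $1/p<\alpha<1-\gamma$ and check, correctly, that $f\in L^p$ while $\mathcal{M}^\gamma f\equiv+\infty$ (the inclusion $B_{r-\|x\|}(o)\subseteq B_r(x)$ and the exponent bookkeeping are both right). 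Your route costs the small extra step of choosing $\alpha$ in the nonempty interval $(1/p,1-\gamma)$, but it buys a slightly stronger conclusion — a single $L^p$ function whose maximal function is infinite at every vertex, rather than an unbounded sequence of ratios — and it avoids the appeal to the $L^q\hookrightarrow L^\infty$ embedding. Either argument is complete.
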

\begin{proof}
The first assertion is Remark \ref{remark 1}, while the second and the third follow directly from Proposition \ref{counterexample 2}. To prove the last assertion, leveraging the discrete $L^p$ spaces inclusions, it is enough to show that $\mathcal{M}^\gamma$ is unbounded from $L^p$ to $L^\infty$ when $p>1/(1-\gamma)$.

To prove this, consider the sequence of functions defined by $f_n=\chi_{B_n(o)}$, $n \in \mathbb N$. We know from the proof of Proposition \ref{counterexample 2} that $\|f_n\|_{p}=|B_n(o)|^{1/p}$ and that $    \mathcal{M}^\gamma f_n(o) \ge |B_n(o)|^{1-\gamma}$. Thus, if $p>1/(1-\gamma)$,
\begin{align*}
    \frac{\| \mathcal{M}^\gamma f_n\|_{\infty}}{\|f_n\|_{p}} \ge |B_n(o)|^{1-\gamma-1/p}\to \infty, \qquad\mathrm{as} \ n \to \infty,
\end{align*}
which implies (ii).
\end{proof}

At this point, we have a complete description of the values of $p,q$ for which $\mathcal{M}^\gamma$ is of restricted weak type $(p,q)$ and for which is not.

\begin{cor}
Let $\gamma\in (0,1)$. Then, $\mathcal M^{\gamma}$ is of restricted weak type $(p,q)$ if and only if $1\leq p\leq q\leq\infty$ and $q\geq 1/\gamma$ and $p\leq 1/(1-\gamma)$. 
\end{cor}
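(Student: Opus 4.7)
The plan is to establish the two directions separately. The sufficiency is an organized invocation of the endpoint theorems of Section \ref{sec: boundedness}, while the necessity amounts to upgrading the counterexamples of Section \ref{sec:opt} to the restricted weak setting.

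For the ``if'' direction I would split according to whether $\gamma \le 1/2$ or $\gamma > 1/2$. If $\gamma \in (0, 1/2]$, Theorem \ref{complex interpolation} gives restricted weak type $(1/(1-\gamma), 1/\gamma)$, and since $1/(1-\gamma) \le 1/\gamma$ in this range, Lemma \ref{lem:inclusions} propagates this to every admissible $(p,q)$. If $\gamma \in (1/2, 1)$, Theorem \ref{p: homtree} gives restricted weak type $(1/\gamma, 1/\gamma)$, and the bound of Theorem \ref{complex interpolation} composed with the Lorentz inclusion $L^{1/(1-\gamma),1} \hookrightarrow L^{1/(1-\gamma), 1/[2(1-\gamma)]}$ (valid because $1/[2(1-\gamma)] \ge 1$ when $\gamma \ge 1/2$) gives restricted weak type $(1/(1-\gamma), 1/(1-\gamma))$. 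These two endpoints together with Lemma \ref{lem:inclusions} leave uncovered only the triangular region $1/\gamma < p \le q < 1/(1-\gamma)$, which is filled in by selecting, for each such $(p,q)$, a value $t = p$ and using the strong type $(t,t)$ supplied by Theorem \ref{strong large gamma}(i) (which implies restricted weak type $(t,t)$, hence restricted weak type $(p,q)$ by Lemma \ref{lem:inclusions}). The edge case $q = \infty$ is subsumed directly under Theorem \ref{strong large gamma}.

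For the ``only if'' direction I would argue via explicit test functions, very much in the spirit of Section \ref{sec:opt} but without the restriction $t < \infty$ that appears in Proposition \ref{counterexample 2}. The constraint $q \ge 1/\gamma$ is forced by testing on $\chi_{\{o\}}$: the computation $\mathcal{M}^\gamma \delta_o(x) \approx k^{-\gamma \|x\|}$ yields distribution function of cardinality $\approx \lambda^{-1/\gamma}$ at level $\lambda$, so $\|\mathcal{M}^\gamma \delta_o\|_{q, \infty} = \infty$ whenever $q < 1/\gamma$, while $\|\delta_o\|_{p, 1} \approx 1$. The constraint $p \le 1/(1-\gamma)$ is forced by testing on $\chi_{B_n(o)}$: one has $\|\chi_{B_n(o)}\|_{p, 1} \approx k^{n/p}$, while the peak value $\mathcal{M}^\gamma \chi_{B_n(o)}(o) \approx k^{n(1-\gamma)}$ alone already yields $\|\mathcal{M}^\gamma \chi_{B_n(o)}\|_{q, \infty} \gtrsim k^{n(1-\gamma)}$. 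Finally, $p \le q$ is forced by the same test: the lower bound $\mathcal{M}^\gamma \chi_{B_n(o)}(x) \gtrsim k^{(n-\|x\|)(1-\gamma)}$ on $B_n(o)$ yields at least $\approx k^n$ points at which $\mathcal{M}^\gamma \chi_{B_n(o)} > 1$, so that $\|\mathcal{M}^\gamma \chi_{B_n(o)}\|_{q, \infty} \gtrsim k^{n/q}$, and comparison with $k^{n/p}$ gives $1/q \le 1/p$.

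The only point requiring care is the sufficiency in the regime $\gamma > 1/2$: the two available restricted weak type endpoints together with Lemma \ref{lem:inclusions} do not by themselves reach the diagonal stretch between $(1/\gamma, 1/\gamma)$ and $(1/(1-\gamma), 1/(1-\gamma))$, so Theorem \ref{strong large gamma} must be invoked to bridge the gap. Everything else is routine bookkeeping.
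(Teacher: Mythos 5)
Your proposal is correct. The ``if'' half follows the paper's own route almost verbatim: the paper derives it from Theorems \ref{complex interpolation} and \ref{p: homtree}, interpolation and Lemma \ref{lem:inclusions}, and your use of Theorem \ref{strong large gamma}(i) to bridge the diagonal gap between $(1/\gamma,1/\gamma)$ and $(1/(1-\gamma),1/(1-\gamma))$ when $\gamma>1/2$ is just that interpolation step repackaged; your observation that the Lorentz embedding $L^{1/(1-\gamma),1}\hookrightarrow L^{1/(1-\gamma),1/[2(1-\gamma)]}$ turns Theorem \ref{complex interpolation} into restricted weak type $(1/(1-\gamma),1/(1-\gamma))$ is exactly the point the paper relies on (it is also spelled out in the Remark following Theorem \ref{p: homtree}). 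Where you genuinely diverge is the ``only if'' half: the paper argues by contraposition, saying that restricted weak type at a forbidden $(p,q)$ would, via Lemma \ref{lem:inclusions}, force strong type at nearby points excluded by Proposition \ref{optaxis}, whereas you test $\delta_o$ and $\chi_{B_n(o)}$ directly against the $L^{q,\infty}$ norm. The computations you invoke are precisely those already carried out in the proofs of Propositions \ref{counterexample 2} and \ref{optaxis}, so nothing new is needed, and your direct version has the mild advantage of being self-contained and of sidestepping the boundary care required by the contrapositive (Lemma \ref{lem:inclusions} only yields strong type at strictly smaller $p$ and strictly larger $q$, so one must check that a forbidden strong-type point can always be found in the relevant open quadrant, e.g.\ when $p=1$). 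The paper's version is shorter; yours is more explicit. Both are sound.
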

\begin{proof}
 The if part follows from Theorems \ref{complex interpolation} and \ref{p: homtree}, interpolation and Lemma \ref{lem:inclusions}. The only if part must hold true since otherwise, by Lemma \ref{lem:inclusions}, Proposition \ref{optaxis} would be contradicted.
\end{proof}

\begin{oss}\label{critical segment}
The description of the strong type boundedness region for $\mathcal{M}^\gamma$ is not complete yet for $\gamma\in (0,1)$. Indeed, the strong $(1/(1-\gamma),q)$ boundedness for $0<1/q< \min\{1-\gamma,\gamma\}$ has not been established nor disproved.
This remains an open question. The best we can say at present is what we obtain interpolating the result in Theorem \ref{complex interpolation} with (ii) in Theorem \ref{strong large gamma}, i.e., that $\mathcal{M}^\gamma$ is bounded from $L^{1/(1-\gamma),t}$ to $L^{q,\infty}$, where $t=q/(1+q-\gamma q)$ for $\gamma\in(1/2,1)$ and $t=q'$ for $\gamma\in(0,1/2]$.
\end{oss}

Next picture summarizes all the information that we obtained on the strong type boundedness of $\mathcal{M}^\gamma.$
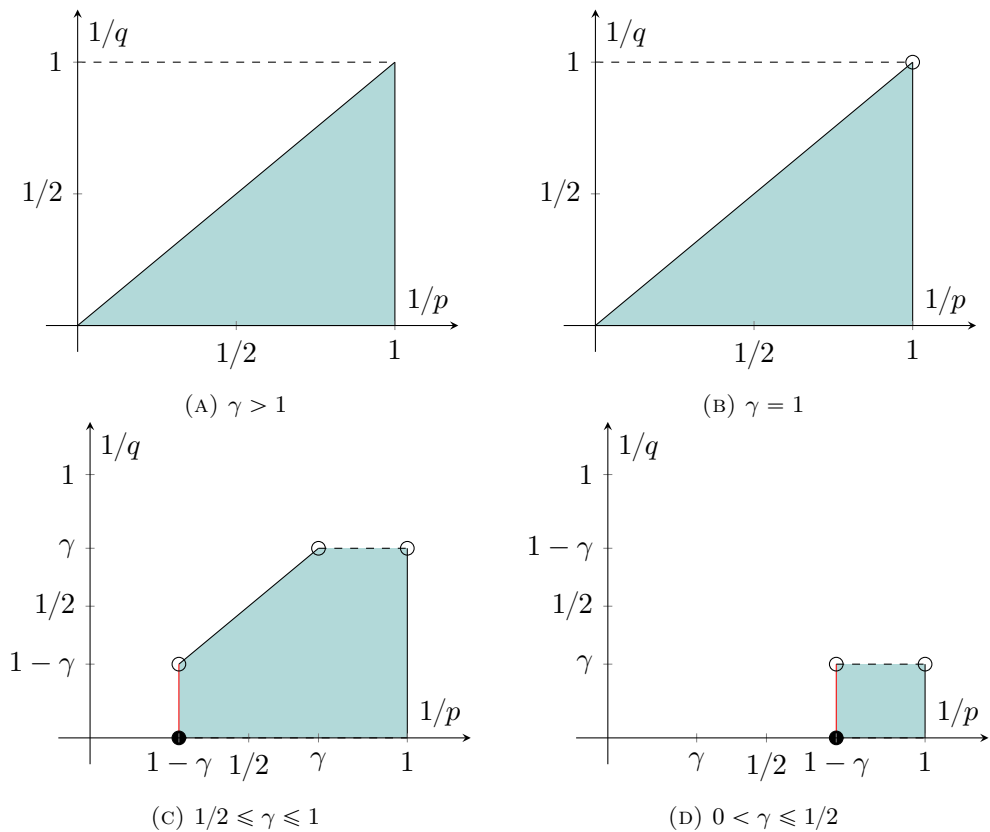
\begin{figure}[H]
\centering
\begin{tabular}{cc}
\subfloat[$\gamma> 1$]{
\begin{tikzpicture}[scale=0.8]
\begin{axis}[
xtick = {0, 0.5,1},
        xticklabels = {$0$,$1/2$,$1$},
ytick = {0, 0.5,1},
        yticklabels = {$0$,$1/2$,$1$},
axis lines=middle,
xlabel = {$1/p$},
ylabel = {$1/q$},
xmin=-0.1,
xmax=1.2,
ymin=-0.1,
ymax=1.2,
axis on top,
]
\addplot[name path=line0, domain = 0:1] {0};
\addplot[name path=line1, domain = 0:1] {x};
\addplot[name path=line2, dashed, domain = 0:1]{1}; 

\draw  (axis cs:{1,0}) -- (axis cs:{1,1});

\addplot [teal!30] fill between [of = line0 and line1];
\end{axis}
\end{tikzpicture}
}
&\subfloat[$\gamma=1$]{
\begin{tikzpicture}[scale=0.8]
\begin{axis}[
xtick = {0, 0.5,1},
        xticklabels = {$0$,$1/2$,$1$},
ytick = {0, 0.5,1},
        yticklabels = {$0$,$1/2$,$1$},
axis lines=middle,
xlabel = {$1/p$},
ylabel = {$1/q$},
xmin=-0.1,
xmax=1.2,
ymin=-0.1,
ymax=1.2,
axis on top,
]
\addplot[name path=line0, domain = 0:1] {0};
\addplot[name path=line1, domain = 0:1] {x};
\addplot[name path=line2, dashed, domain = 0:1]{1}; 

\draw  (axis cs:{1,0}) -- (axis cs:{1,1});

\path (1, 1) node [shape=circle,draw,scale=0.5] {};
\addplot [teal!30] fill between [of = line0 and line1];
\end{axis}
\end{tikzpicture}
}\\
\subfloat[$1/2\leq\gamma\leq 1$]{
\begin{tikzpicture}[scale=0.8]
\begin{axis}[
xtick = {1-0.8*0.9, 0.5,0.8*0.9,1},
        xticklabels = {$1-\gamma$,$1/2$,$\gamma$,$1$},
ytick = {1-0.8*0.9,0.5,0.8*0.9,1},
        yticklabels = {$1-\gamma$,$1/2$,$\gamma$,$1$},
axis lines=middle,
xlabel = {$1/p$},
ylabel = {$1/q$},
xmin=-0.1,
xmax=1.2,
ymin=-0.1,
ymax=1.2,
axis on top,
]
\addplot[name path=line0,dashed, domain = 1-0.8*0.9:1] {0};
\addplot[name path=line1,, domain = (1-0.8*0.9:0.8*0.9] {x};
\draw [name path=line2, red] (axis cs:{1-0.8*0.9,0}) -- (axis cs:{1-0.8*0.9,1-0.8*0.9});
\addplot[name path=line3, dashed, ,  domain = 0.8*0.9:1] {0.8*0.9};
\draw  (axis cs:{1,0}) -- (axis cs:{1,0.8*0.9});
\addplot [teal!30] fill between [of = line0 and line2, soft clip={domain=1-0.8:1-0.8*0.9}];
\path (0.8*0.9, 0.8*0.9) node [shape=circle,draw,scale=0.5] {};
\path (1, 0.8*0.9) node [shape=circle,draw,scale=0.5] {};
\path (1-0.8*0.9, 1-0.8*0.9) node [shape=circle,draw,scale=0.5] {};
\path (1-0.8*0.9, 0) node [shape=circle, fill=black, draw,scale=0.5] {};
\addplot [teal!30] fill between [of = line0 and line1, ];
\addplot [teal!30] fill between [of = line0 and line3, soft clip={domain=0.8*0.9:1}];
\end{axis}
\end{tikzpicture}
}
&\subfloat[$0<\gamma\leq 1/2$]{
\begin{tikzpicture}[scale=0.8]
\begin{axis}[
xtick = {1-0.8*0.9, 0.5,0.8*0.9,1},
        xticklabels = {$\gamma$,$1/2$,$1-\gamma$,$1$},
ytick = {1-0.8*0.9,0.5,0.8*0.9,1},
        yticklabels = {$\gamma$,$1/2$,$1-\gamma$,$1$},
axis lines=middle,
xlabel = {$1/p$},
ylabel = {$1/q$},
xmin=-0.1,
xmax=1.2,
ymin=-0.1,
ymax=1.2,
axis on top,
]
\addplot[name path=line0,dashed, domain = 0.8*0.9:1] {0};
\draw [name path=line2, red] (axis cs:{0.8*0.9,0}) -- (axis cs:{0.8*0.9,1-0.8*0.9});
\addplot[name path=line3, dashed, ,  domain = 0.8*0.9:1] {1-0.8*0.9};
\draw  (axis cs:{1,0}) -- (axis cs:{1,1-0.8*0.9});
\addplot [teal!30] fill between [of = line0 and line2, soft clip={domain=1-0.8:1-0.8*0.9}];
\path (0.8*0.9, 1-0.8*0.9) node [shape=circle,draw,scale=0.5] {};
\path (1, 1-0.8*0.9) node [shape=circle,draw,scale=0.5] {};
\path (0.8*0.9, 0) node [shape=circle, fill=black, draw,scale=0.5] {};
\addplot [teal!30] fill between [of = line0 and line3, soft clip={domain=0.8*0.9:1}];
\end{axis}
\end{tikzpicture}
}
\end{tabular}

\caption{$\mathcal{M}^\gamma$ is of strong type $(p,q)$ if the point $(1/p,1/q)$ lies in the colored region or along the portion of its boundary made of black continuous lines and the black point. It is unbounded outside the colored region, at circled points and along the dashed line. We don't know whether it is bounded or not along the red segment.}\label{fig:2}
\end{figure}

We end the section with a last natural question concerning optimality, that is, if the endpoint results obtained in Theorems \ref{complex interpolation} and \ref{p: homtree} can be improved to $L^{p,s}$ to $L^{q,\infty}$ boundedness results for some $s>1$. In the particular case $\gamma=1/2$, the two theorems reduce to Veca's result \cite{V}, i.e., that $\mathcal{M}^{1/2}$ is of restricted weak type $(2,2)$, and in this case we are able to show that the answer to the above question is negative. This represents a discrete counterpart of the result by Ionescu \cite{Ionescu2} obtained in the setting of non-compact symmetric spaces mentioned in the introduction.

In order to prove this result, as well as the one that follows, it is useful to highlight the following formula, which holds true for any radial nonnegative function $f\in\mathbb{C}^T$ and  can be obtained by a straightforward computation,
\begin{equation}\label{rad1}
 \sum_{y \in S_n(x)} f(y) \approx \begin{cases} \sum_{j=0}^nf(\Vert x\Vert +n-2j)k^{n-j} &\text{if $n \le \Vert x\Vert $,} \\ 
   \sum_{j=0}^{\Vert x\Vert }f(\Vert x\Vert +n-2j)k^{n-j} &\text{otherwise.}
    \end{cases}
\end{equation}

\begin{thm}\label{optM12} For every $s>1$,
$\mathcal{M}^{1/2}$ is unbounded from $L^{2,s}$ to $L^{2,\infty}.$
\end{thm}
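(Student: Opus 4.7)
The plan is to exhibit a sequence $\{f_N\}_{N \in \mathbb{N}}$ of radial functions for which the ratio $\|\mathcal{M}^{1/2} f_N\|_{2,\infty}/\|f_N\|_{2,s}$ diverges. The natural candidate is the truncation $f_N(x) = k^{-\|x\|/2}\chi_{B_N(o)}(x)$ of the kernel $a_{1/2}$. Indeed $a_{1/2}$ itself is the extremal radial function sitting exactly on the boundary of $L^{2,\infty}$, but it fails to belong to any $L^{2,s}$ with $s<\infty$; truncating at radius $N$ brings it into every Lorentz space with a controlled norm. A direct application of Lemma \ref{lem: P} with $g(n)=f_N(n)k^{n/2}=\chi_{\{0,\ldots,N\}}(n)$ yields $\|f_N\|_{2,s}\approx(N+1)^{1/s}$ for $s\in[1,\infty)$ and $\|f_N\|_{2,\infty}\approx 1$.

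The central step is the pointwise lower bound $\mathcal{M}^{1/2}f_N(x)\gtrsim(\|x\|+1)k^{-\|x\|/2}$ valid for every $x$ with $m:=\|x\|\le N/2$. Selecting the radius $r=m$ in the supremum defining $\mathcal{M}^{1/2}f_N(x)$, the inclusion $B_m(x)\subset B_{2m}(o)\subset B_N(o)$ removes the truncation entirely. Applying formula \eqref{rad1} to the radial profile $u\mapsto k^{-u/2}$, the factors conspire so that $k^{-(m+n-2j)/2}\,k^{n-j}=k^{(n-m)/2}$ becomes independent of $j$, yielding $\sum_{y\in S_n(x)}k^{-\|y\|/2}\approx(n+1)k^{(n-m)/2}$ for $n\le m$. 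Summing from $n=0$ to $m$, the $n=m$ term alone contributes $m+1$, hence $\sum_{y\in B_m(x)}k^{-\|y\|/2}\gtrsim m+1$; dividing by $|B_m(o)|^{1/2}\approx k^{m/2}$ delivers the claimed lower bound.

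Since $\mathcal{M}^{1/2}f_N$ is radial, a second application of Lemma \ref{lem: P}, now with $g(n)=(n+1)\chi_{\{0,\ldots,\lfloor N/2\rfloor\}}(n)$, gives $\|\mathcal{M}^{1/2}f_N\|_{2,\infty}\gtrsim\|g\|_{L^\infty(\mathbb{N})}\approx N$. Combining the two estimates,
$$
\frac{\|\mathcal{M}^{1/2}f_N\|_{2,\infty}}{\|f_N\|_{2,s}}\gtrsim\frac{N}{(N+1)^{1/s}}\approx N^{1-1/s}\longrightarrow\infty
$$
as $N\to\infty$, for every $s>1$ (and a fortiori for $s=\infty$, since then the ratio is already $\approx N$). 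The delicate point is the sphere-sum computation in the second paragraph: the extra polynomial factor $(n+1)$ that the maximal operator manufactures on top of the kernel $a_{1/2}$ is precisely the factor that the input norm $(N+1)^{1/s}$ can absorb only when $s=1$, consistent with the optimality of Veca's restricted weak $(2,2)$ endpoint.
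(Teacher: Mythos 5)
Your proof is correct and rests on the same mechanism as the paper's: testing against (a variant of) the kernel $k^{-\|x\|/2}$, using formula \eqref{rad1} at radius $n=\|x\|$ to extract the polynomial gain $(\|x\|+1)$ from the sphere sum, and computing the Lorentz norms via Lemma \ref{lem: P}. The only difference is in the packaging: you use the sequence of truncations $k^{-\|x\|/2}\chi_{B_N(o)}$ and let the norm ratio $N^{1-1/s}$ blow up, whereas the paper damps the kernel by $(1+\|x\|)^{-\beta}$ with $1/s<\beta<1$ to produce a single function in $L^{2,s}$ whose image already fails to lie in $L^{2,\infty}$.
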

\begin{proof}
Fix $s>1$ and $1/s<\beta<1$. Define $g \in \mathbb C ^{T}$ by
\begin{align*}
    g(x)=\frac{k^{-\Vert x\Vert /2}}{(1+\Vert x\Vert )^\beta}, \qquad x \in T.
\end{align*}
Since $g$ is radial, by Lemma \ref{lem: P}, $\|g\|_{{2,s}}^s \approx  \sum_{n=0}^\infty (1+n)^{-\beta s}$, thus $g$ belongs to $L^{2,s}$. Moreover by formula \eqref{rad1}, for any $x\in T$,
\begin{equation*}
    \sum_{y \in S_{\Vert x\Vert }(x)}g(y)\approx \sum_{j=0}^{\Vert x\Vert } \frac{k^{-2(\Vert x\Vert -j)/2}k^{\Vert x\Vert -j}}{(1+2(\Vert x\Vert -j))^{\beta}}=\sum_{j=0}^{\Vert x\Vert } \frac{1}{(1+2(\Vert x\Vert -j))^{\beta}}\approx (1+\Vert x\Vert )^{1-\beta}.
\end{equation*}
It follows,
\begin{align*}
   \mathcal{M}^{1/2}g(x) \geq \frac{1}{|B_{\Vert x\Vert}(o)|^{1/2}}\sum_{y \in S_{\Vert x\Vert}(x)} g(y) \gtrsim k^{-\Vert x\Vert /2}(1+\Vert x\Vert )^{1-\beta}=:m(x).
\end{align*} 
Since $m$ is radial and $\beta<1$, again by Lemma \ref{lem: P},
\begin{align*}
    \|m\|_{{2,\infty}} \approx \|m(\cdot)k^{|\cdot|/2}\|_{L^\infty(\mathbb N)}=+\infty.
\end{align*}  Hence $\mathcal{M}^{1/2}$ does not map $L^{2,s}$ in $L^{2,\infty}$.
\end{proof}

More in general, one may ask whether a similar strategy can be applied also to values of $\gamma\ne 1/2$. 
Next proposition makes it clear that this is not the case, by showing that no radial function can serve as a counterexample to prove the optimality of Theorems \ref{complex interpolation} and \ref{p: homtree} with respect to the parameter $s$. 



\begin{prop}\label{prop:radial}
Let $(L^{p,s})^\#$ denote the space of radial functions in $L^{p,s}.$ Then, $\mathcal{M}^\gamma$ maps continuously $(L^{1/\gamma,s})^{\#}$ to $L^{1/\gamma,\infty}$ and $(L^{1/(1-\gamma),s})^{\#}$ to $L^{1/(1-\gamma),\infty}$ when $\gamma>1/2$ and $(L^{1/(1-\gamma),s})^{\#}$ to $L^{1/\gamma,\infty}$ when $\gamma<1/2$, for every $s \in [1,\infty]$.
\end{prop}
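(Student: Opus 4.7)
My first reduction will be to the case $s = \infty$: since for every $p \in [1,\infty)$ and $s \in [1, \infty]$ one has the continuous embedding $L^{p,s} \hookrightarrow L^{p,\infty}$, it suffices to prove each of the three mapping properties with $L^{p,s}$ replaced by $L^{p, \infty}$. Moreover, when $f$ is radial so is $\mathcal{M}^\gamma f$: indeed, by Remark \ref{remark 2}, $\mathcal{M}^\gamma f$ is a supremum of convolutions of $|f|$ with the radial kernels $a_{r,\gamma}$, and convolution with a radial function preserves radiality. Writing $F(\ell)$ for the value of a radial $f$ on $S_\ell(o)$, Lemma \ref{lem: P} gives $|F(\ell)| \lesssim \|f\|_{p,\infty}\, k^{-\ell/p}$ on the one hand and, on the other, $\|\mathcal{M}^\gamma f\|_{q, \infty} \approx \sup_m |\mathcal{M}^\gamma f(m)|\, k^{m/q}$. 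The proof is therefore reduced to establishing the pointwise estimate $\mathcal{M}^\gamma f(m) \lesssim \|f\|_{p, \infty}\, k^{-m/q}$ for every $m \in \mathbb{N}$, in each of the three $(p,q)$ regimes of the statement.

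To obtain the pointwise bound, fix $x$ with $\|x\| = m$ and $r \in \mathbb{N}$. The first step is the combinatorial identity
\[
|\{y \in B_r(x) : \|y\| = \ell\}| \approx k^{(\ell - m + \min(r,\, m+\ell))/2}\quad\text{when } |m-\ell| \leq r,
\]
with the left-hand side vanishing otherwise. This follows from \eqref{rad1}, which yields $|\{y \in S_n(x) : \|y\| = \ell\}| \approx k^{(\ell - m + n)/2}$ for $n$ of correct parity in $[|m-\ell|, m+\ell]$, summed as a geometric series in $n$ (dominated by its largest term). Using this identity, I rewrite
\[
\frac{1}{|B_r(o)|^\gamma} \sum_{y \in B_r(x)} |f(y)| \approx k^{-\gamma r} \sum_\ell |F(\ell)| \cdot |\{y \in B_r(x) : \|y\| = \ell\}|,
\]
split the $\ell$-sum according to whether $\ell \leq r-m$ or $\ell \geq |r-m|$, and substitute $|F(\ell)| \leq \|f\|_{p,\infty} k^{-\ell/p}$. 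In each piece the inner sum becomes geometric in $\ell$ (with ratio $k^{1-1/p}$ in the first piece and $k^{1/2-1/p}$ in the second), which can be controlled by its extremal term; after collecting exponents and maximizing over $r$, the resulting bound will be $\lesssim \|f\|_{p,\infty}\, k^{-m/q}$, uniformly in $r$, as desired.

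The main technical labour is the bookkeeping of the three cases. Whether $p > 2$ or $p < 2$, equivalently whether $\gamma$ lies above or below $1/2$, determines whether the dominating $\ell$ in each geometric sum is the largest or the smallest, and one must check that in each of the triples $(\gamma < 1/2,\, p = 1/(1-\gamma),\, q = 1/\gamma)$, $(\gamma > 1/2,\, p = q = 1/\gamma)$ and $(\gamma > 1/2,\, p = q = 1/(1-\gamma))$, the exponents produced by extracting the extremal terms in the inner and outer geometric sums — together with the subsequent supremum in $r$ split between $r \leq m$ and $r > m$ — telescope to precisely $-m/q$, with no logarithmic loss. I expect this case-by-case verification to be the only real obstacle, but once the count $|B_r(x) \cap S_\ell(o)|$ above is in place, each case reduces to a mechanical exponent arithmetic that can be carried out in a few lines.
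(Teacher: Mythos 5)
Your plan is correct and follows essentially the same route as the paper's proof: both reduce, via Lemma \ref{lem: P}, to the pointwise decay bound $f(\ell)\lesssim\|f\|_{p,\infty}k^{-\ell/p}$ for a radial $f$ and then push the sphere-counting formula \eqref{rad1} through geometric-series estimates, your grouping of $B_r(x)$ by the spheres $S_\ell(o)$ being just a transposition of the paper's sphere-by-sphere ($S_n(x)$) decomposition. I verified that the exponent bookkeeping you defer does close in all three regimes (each case yields $k^{-m/q}$ or better, with no logarithmic loss), so the proposal is complete as stated.
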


\begin{proof}
 Let $f$ be a nonnegative  function in $(L^{p,s})^{\#}$, so that by Lemma \ref{lem: P} $g(\cdot):= f(\cdot)k^{(\cdot)/p}$ belongs to $L^{s}(\mathbb N)$ and $\|g\|_{L^s(\mathbb N)} \approx \|f\|_{{p,s}}$. Rewriting \eqref{rad1} in terms of $g$ and then applying H\"older's inequality we get
\begin{equation}\label{rad2}
\begin{split}
 \frac{1}{k^{n\gamma}}\sum_{y \in S_n(x)} f(y) &\approx \begin{cases} k^{-n\gamma}\sum_{j=0}^n g(\Vert x\Vert +n-2j)k^{j(2/p-1)}k^{n/p'-\Vert x\Vert/p} &\text{if $n \le \Vert x\Vert $,} \\ 
   k^{-n\gamma}\sum_{j=0}^{\Vert x\Vert} g(\Vert x\Vert +n-2j)k^{j(2/p-1)}k^{n/p'-\Vert x\Vert/p} &\text{otherwise,}
    \end{cases}\\
    &\lesssim \begin{cases}
      k^{n(1/p'-\gamma)}k^{-\Vert x\Vert /p}\|g\|_{L^s(\mathbb N)} &\text{if $p> 2$,} \\
      k^{n(1/p'-\gamma)}k^{-\Vert x\Vert /p'}\|g\|_{L^s(\mathbb N)} &\text{if $p<2$,}\\
      k^{n(1/p-\gamma)}k^{-\Vert x\Vert /p}\|g\|_{L^s(\mathbb N)} &\text{if $p<2$ and $n\leq \Vert x\Vert$.}
       \\ 
      \end{cases}
\end{split}
\end{equation}
It follows,
\begin{align*}
      \mathcal{M}^\gamma f(x) \lesssim 
      \begin{cases}
      k^{-\Vert x\Vert/p}\|g\|_{L^s(\mathbb N)} &\text{if $2< p\leq 1/(1-\gamma)$,} \\
      k^{-\Vert x\Vert/p'}\|g\|_{L^s(\mathbb N)} &\text{if $p<2$ and $p\leq 1/(1-\gamma)$,}\\
      k^{-\Vert x\Vert/p}\|g\|_{L^s(\mathbb N)} &\text{if $1/\gamma\leq p< 2$ and $n\leq \Vert x\Vert$,} \\ 
      k^{-\gamma\Vert x\Vert}\|g\|_{L^s(\mathbb N)} &\text{if $1/\gamma\leq p< 2$ and $n> \Vert x\Vert$,}
      \end{cases}
\end{align*}
and since $x\mapsto k^{-\Vert x\Vert/t}$ belongs to $L^{p,\infty}$ if and only if $t\leq p$ and $ \|g\|_{L^s(\mathbb N)}\approx \|f\|_{{p,s}}$, we have
\begin{align*}
      \mathcal{M}^\gamma: 
      \begin{cases}
      (L^{p,s})^{\#}\to L^{p,\infty} &\text{if $1/\gamma\leq p\leq 1/(1-\gamma)$ and $p \neq 2$}, \\ 
      (L^{p,s})^{\#}\to L^{p',\infty} &\text{if $p<2$ and $p\leq 1/(1-\gamma)$,} \end{cases}
\end{align*}
The result follows by choosing $p=1/(1-\gamma)$.
\end{proof}



\section{Final remarks}
Let $\omega:T\to\mathbb{R}_+$ and $w(A)=\sum_{x\in T}\omega(x)$, $A\subseteq T$, the associated measure.
In a recent paper \cite{GR} Ghosh and Rela proved that $\mathcal{M}^\gamma$ is bounded from $L^p(\omega)$ to $L^q(\omega)$, with $1<p\leq q<\infty$ and $\gamma\in (0,1)$, if there exists $\varepsilon\in (0,1)$ such that $\omega \in \mathcal{
Z}^{\varepsilon, 1-\gamma}_{p,q}$, i.e., if
\begin{align*}
  \sum_{x \in E} \omega(F \cap S_r(x)) \leq C k^{\varepsilon r \gamma} \omega(E)^{1/p}\omega(F)^{1-1/q}, \quad \text{for some } C>0.
\end{align*}
It is natural to check if this sufficient condition can provide a positive answer to the only question we left open regarding strong boundedness (see Remark \ref{critical segment}). 
We are showing here that this is not the case, since $|\cdot|\not\in \mathcal{Z}^{\varepsilon, 1-\gamma}_{1/(1-\gamma),q}$, for any $\varepsilon\in (0,1)$, and any $1<q<\infty$. To see this, observe that for any $\gamma,\gamma'\in(0,1)$, $\mathcal{
Z}^{\varepsilon, 1-\gamma}_{p,q}=\mathcal{
Z}^{\varepsilon', 1-\gamma'}_{p,q}$, where $\varepsilon'=\varepsilon\gamma/\gamma'$. Now, choose $\gamma'\in (\varepsilon\gamma,\gamma)$ and $p=1/(1-\gamma)$, so that $p>1/(1-\gamma')$. If $|\cdot|\in \mathcal{Z}^{\varepsilon, 1-\gamma}_{p,q}$, then we would have that $\mathcal{M}^{\gamma'}$ is bounded from $L^p$ to $L^q$, with $p>1/(1-\gamma')$, which contradicts Theorem \ref{optaxis}. This shows that the result in \cite{GR} cannot improve our Theorem \ref{strong large gamma}.

{\bf{Acknowledgments.}} The authors are indebted to Stefano Meda and Maria Vallarino for the many precious discussions on the topics of the paper.

\bibliographystyle{abbrv}
{\small
\bibliography{references2}}

\end{document}